    \def\qed{\hfill$\sqcap\kern-8.0pt\hbox{$\sqcup$}$\\}
    \def\beq{\begin{eqnarray}}
    \def\eeq{\end{eqnarray}}
    \def\beqq{\begin{eqnarray*}}
    \def\eeqq{\end{eqnarray*}}
    \def\re{\textnormal {Re}}
    \def\im{\textnormal {Im}}
    \def\r{{\mathbb R}}
    \def\c{{\mathbb C}}
    \def\d{{\textnormal d}}
    \def\i{{\textnormal i}}
\newtheorem{theorem}{Theorem}
\newtheorem{proposition}{Proposition}
\newtheorem{corollary}{Corollary}
\theoremstyle{definition}
\title{Series expansions for the Riemann zeta function}
\author{Alexey Kuznetsov}
\address{Department of Mathematics and Statistics, York University,
Toronto, Ontario, M3J1P3,
Canada}
\email{akuznets@yorku.ca}
\date{\today}
\begin{document}
\maketitle

 \begin{center}
  \emph{To the memory of Richard Paris}
\end{center}

\begin{abstract}
We prove a general result  on representing the Riemann zeta function as a convergent infinite series in a complex vertical  strip containing the critical line. We use this result to re-derive known expansions as well as to discover new series representations of the Riemann zeta function in terms of the incomplete gamma functions, generalized hypergeometric functions and Meijer $G$-functions.  
\end{abstract}

\bigskip

 \noindent {\it Keywords}: Riemann zeta function, approximate formula, Mellin transform, incomplete gamma function, generalized hypergeometric function, Meijer $G$-function
{\vskip 0.15cm}
 \noindent {\it 2020 Mathematics Subject Classification }:  11M06, 41A58
\bigskip


\section{Introduction}

The asymptotics of the Riemann zeta function on the critical line was a topic that greatly interested Richard Paris. He wrote the first paper \cite{Paris1994} on this subject in 1994. This was followed by a series of papers   \cite{Paris1994b,Paris2000,Paris_Cang1997,Paris_Cang1997b} in the 1990s, two of them co-authored with his PhD student Shuang Cang. Richard returned to this topic in 2009 \cite{Paris2009} and, very recently, in 2022 \cite{Paris2022}. 
 To explain the origin of the motivation for his work on the Riemann zeta function, we need to start with the Riemann-Siegel formula
(see  \cite{Titchmarsh1987}[Section 4.17]):
\begin{align}\label{RS_formula}
Z(t):=e^{\i \theta(t)} \zeta\big(\tfrac{1}{2}+\i t\big)&=2 \sum\limits_{n=1}^{N_t} \frac{\cos(\theta(t)-t \ln(n))}{\sqrt{n}} \\ \nonumber
&+ (-1)^{N_t-1} \Big( \frac{2\pi}{t} \Big)^{\frac{1}{4}} 
\frac{\cos(\pi \omega^2/2+3\pi /8)}{\cos(\pi \omega)} + O(t^{-3/4}), \;\;\;
t\to +\infty.
\end{align}
Here $\zeta(s)$ is the Riemann zeta function, $\theta$ is defined as
$$
\theta(t):=\Big[ \frac{\Gamma(1/4+\i t/2)}{
\Gamma(1/4-\i t/2)} \Big]^{1/2} \pi^{- \i t/2},
$$
and  $N_t:=\lfloor \sqrt{t/(2\pi)} \rfloor$ 
and $\omega:=1+2(N_t-\sqrt{t/(2\pi)})$. The function $Z(t)$ is even, it is real for real values of $t$ and satisfies $|Z(t)|=|\zeta(1/2+\i t)|$, which makes it a convenient tool for detecting zeros of $\zeta(s)$ on the critical line $\re(s)=1/2$. The sum in \eqref{RS_formula} is the leading term in the asymptotic expansion of $Z(t)$ (it is often called ``the main sum"). We also included in \eqref{RS_formula} the first order correction term, which  has asymptotic order $O(t^{-1/4})$. The higher order correction terms can also be given explicitly, see \cite{Titchmarsh1987}[Theorem 4.16]. In applications we are usually interested to compute $Z(t)$ for $t$ very large, where most computational effort is spent in  evaluating the main sum in \eqref{RS_formula}. See \cite{Hiary2018} for results of such computations for $t$ as large as $10^{34}$. For such extremely large values of $t$ the error term $O(t^{-3/4})$ is already very small. Gabcke in his PhD Thesis \cite{Gabcke1979} proved that this error term is in fact less than $0.127 t^{-3/4}$ for all $t\ge 200$ and he also provided explicit bounds for higher order error terms.  

An important feature of the Riemann-Siegel formula \eqref{RS_formula} is that the main sum, which is the leading asymptotic term, is a discontinuous function of $t$. The discontinuity in the main sum in \eqref{RS_formula} carries over to a discontinuity in the asymptotic correction term. One would hope that finding asymptotic approximations to $Z(t)$ where the leading term is a smooth function of $t$ will result in smaller correction terms over wider range of values of $t$.   Berry and Keating  \cite{Berry_Keating1992} in 1992 developed a ``smoothed" version of the Riemann-Siegel formula. They derived an asymptotic approximation to $Z(t)$, where the leading term is given by a  {\it convergent} series
\begin{equation}\label{BK_formula}
Z_0(t,K)=2 \re \sum\limits_{n\ge 1} \frac{e^{\i (\theta(t)- t\ln(n))}}{\sqrt{n}}
\times \frac{1}{2} {\textnormal{erfc}}\Big( \frac{\xi(n,t)}{q(K,t)} \sqrt{\frac{t}{2}} \Big). 
\end{equation}
In the above formula $\xi(n,t):=\ln(n)-\theta'(t)$, $q^2(K,t):=K^2-\i t \theta''(t)$ and ${\textnormal{erfc}}(x)$ 
is the complementary error function. The parameter $K> 0$ in \eqref{BK_formula}  can be chosen freely. For large $t$ we have $\xi(n,t) \approx \ln (n/N_t)$ and $q^2(K,t) \approx K^2 - \i /2$, thus $Z_0(t,K)$ resembles the main sum in \eqref{RS_formula} but with the complementary error function smoothing the sharp cut-off at $n=N_t$. As Berry and Keating show in \cite{Berry_Keating1992}, in the infinite series in \eqref{BK_formula} only the terms with $n\le N_t+A K$  are important (the terms with larger $n$ are much smaller). This clarifies the meaning of the parameter $K$: it is proportional to the number of terms by which the truncation in the Riemann-Siegel formula has been smoothed. Berry and Keating also show that
the leading approximation term $Z_0(t,K)$  contains the first correction term shown in  \eqref{RS_formula} and that numerically $Z_0(t,K)$ gives a better approximation to $Z(t)$ for large $t$ when compared with the main sum in \eqref{RS_formula}.


Following the paper by Berry and Keating, Richard Paris (together with his PhD student Shuang Cang) developed several other asymptotic approximations to $Z(t)$ and investigated their properties. In \cite{Paris1994} an asymptotic formula for $Z(t)$ was derived by applying the Poisson summation to the tail of the Dirichlet series of $\zeta(s)$ for $\re(s)>1$.  In \cite{Paris_Cang1997} Paris and Cang developed another approximation to $Z(t)$, starting with the following formula 
\begin{equation}\label{PC_formula1}
Z(t)=2 \re \; e^{\i \theta(t)} \Big[ \sum\limits_{n\ge 1} n^{-s} Q(s/2,\pi \i n^2) - \frac{\pi^{s/2} e^{\pi \i s/4}}{s \Gamma(s/2)} \Big], \;\;\; s=1/2+\i t,
\end{equation}
where $Q(s,x):=\Gamma(s,x)/\Gamma(s)$ is the normalized incomplete gamma function.
Using the uniform asymptotic results for $Q(s,x)$, they found that this new approximation was more accurate than the Riemann-Siegel formula, and that it required little additional computational effort. 
Two generalizations of this approximation were studied in  \cite{Paris1994b,Paris2009,Paris_Cang1997b}.

Our goal in this paper is to state and prove a general result (Theorem \ref{thm_main} below), which allows one to derive  convergent series representations for the Riemann zeta function in a vertical strip containing the critical line. This result and its proof were inspired by the method used by Berry and Keating in \cite{Berry_Keating1992} (though very similar ideas can be found in \cite{Lavrik1968} and in 
\cite{Rubinstein_2005}[Theorem 1]). Using this general result we re-derive formula \eqref{PC_formula1} and obtain several new series representations for the Riemann zeta function, given in in terms of the incomplete Gamma functions,  generalized hypergeometric functions and Meijer $G$-functions.

\section{Results}
\label{section_results}

For $a \in \r$ and $b<\pi/2$ we denote 
\begin{equation}\label{domain_analyticity}
{\mathcal D}_{a,b}:=\Big\{(s,x) \in {\mathbb C}^2 \; : \; \re(s)>-a,\; x\neq 0, \;  |{\textnormal{arg}}(x)|<\frac{\pi}{2}-b \Big\}. 
\end{equation} 
The following theorem is our main result. 

\begin{theorem}\label{thm_main}
Let $g(z)$ be an odd function that is analytic everywhere in the vertical strip $|\re(z)|<a$ for some $a>1/2$, except for a simple pole at $z=0$ with residue equal to one. We also assume that for some  $C>0$
and $b < \frac{\pi}{2}$ we have
\begin{equation}\label{g_asymptotics}
|g(z)|< C e^{b|\im(z)|/2} \;\; {\textnormal{ for all $z$ with }} \; |\re(z)|<a \; {\textnormal{ and }} \; |\im(z)|>1.
\end{equation}
Then the following statements are true: 
\begin{itemize}
\item[(i)] There exists a function $h(s,x)$,  analytic in the domain 
${\mathcal D}_{a,b}$
 and having Mellin transform 
\begin{equation} \label{h_Melin_transform}
\int_0^{\infty}  h(s,x) x^{w-1}\d x= \Gamma(w) g(2w-s), 
\end{equation}
where $\frac{1}{2}\max(0,\re(s))<\re(w)<
\frac{1}{2}(a+\re(s))$. 
\item[(ii)] For any $\tau \in {\mathbb C}\setminus \{0\}$ with $|{\textnormal{arg}}(\tau)|<\frac{\pi}{2}-b$ and $s$ in the vertical strip $1-a<\re(s)<a$  we have 
\begin{align}\label{eqn_main}
\pi^{-s/2} \Gamma(s/2) \zeta(s)&=
\tau^{s/2} \Big[ - g(s)+ 2\sum\limits_{n\ge 1}   h(s,  \pi \tau n^2) \Big ] 
\\ \nonumber
&+  \tau^{(s-1)/2}\Big[ - g(1-s)+2\sum\limits_{n\ge 1}   h(1-s,  \pi \tau^{-1} n^2) \Big]. 
\end{align}
Both infinite series in \eqref{eqn_main} congerve absolutely and uniformly in the strip $1-a<\re(s)<a$.
\end{itemize}
\end{theorem}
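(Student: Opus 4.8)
The plan is to start from the classical integral representation of the completed zeta function obtained from the theta function. Recall that for $\re(s)>1$ one has the Mellin-type identity
\[
\pi^{-s/2}\Gamma(s/2)\zeta(s)=\int_0^{\infty} \psi(x)\, x^{s/2-1}\, \d x,
\]
where $\psi(x)=\sum_{n\ge 1} e^{-\pi n^2 x}$, and more usefully the symmetric version coming from the functional equation of the Jacobi theta function $\vartheta(x)=1+2\psi(x)$, namely $\vartheta(1/x)=\sqrt{x}\,\vartheta(x)$. My first step is therefore to introduce the auxiliary kernel $h(s,x)$ from part (i) and write down, for $\tau$ in the admissible sector and $s$ in a suitable right half-plane (say $\re(s)>a$ or at least $\re(s)>1$), the sum $2\sum_{n\ge 1} h(s,\pi\tau n^2)$ by substituting the inverse Mellin transform \eqref{h_Melin_transform}: interchanging sum and contour integral produces $\frac{1}{2\pi\i}\int_{(c)}\Gamma(w) g(2w-s)\zeta(2w)\,(\pi\tau)^{-w}\d w$ on a vertical line $\re(w)=c$ with $\frac12\max(0,\re(s))<c<\frac12(a+\re(s))$ and $c>1/2$ so that $\zeta(2w)$ is given by its Dirichlet series. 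Multiplying by $\tau^{s/2}$ and changing variables $w\mapsto w+s/2$ turns this into $\frac{1}{2\pi\i}\int \Gamma(w)g(2w)\zeta(2w+s)\pi^{-w-s/2}\tau^{-w}\d w$ shifted appropriately; the companion term with $\tau^{(s-1)/2}$ and $1-s$, $\tau^{-1}$ will, after the analogous manipulation, produce an integral of the same integrand but over a line on the other side of the critical strip.

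The heart of the argument is then a contour-shift / residue computation. The integrand, as a function of $w$, has poles coming from three sources: the pole of $g(2w-s)$ at $2w=s$ (i.e. $w=s/2$), the pole of $g$ at argument $0$ in the shifted picture, and the pole of $\zeta(2w)$ at $w=1/2$ (equivalently $\zeta(2w+s)$ at $w=(1-s)/2$); the $\Gamma(w)$ factor contributes poles at $w=0,-1,-2,\dots$ but these lie to the left of all contours we use and will not be crossed. The strategy is to move the contour in the first integral from its line $\re(w)=c$ (just right of $s/2$) leftward across $w=s/2$ — picking up, from the simple pole of $g$ with residue one, precisely the residue $\frac12\tau^{s/2}\cdot\pi^{-s/2}\Gamma(s/2)\zeta(s)\cdot 2 = \tau^{s/2}\pi^{-s/2}\Gamma(s/2)\zeta(s)$ up to the sign and factor bookkeeping — and also across the pole of $\zeta$, whose residue produces the term $-\tau^{s/2} g(s)$ combined with the corresponding contribution; one then recognizes the remaining shifted integral as exactly the Mellin–Barnes representation of $-\tau^{(s-1)/2}g(1-s)+\tau^{(s-1)/2}2\sum_{n\ge1}h(1-s,\pi\tau^{-1}n^2)$ after sending $w\mapsto -w$ (or $w\mapsto (1-s)/2 - w$) and invoking the functional equation $\zeta(2w)=\chi(2w)\zeta(1-2w)$ together with the reflection/duplication formulas for $\Gamma$; the hypothesis that $g$ is \emph{odd} is exactly what makes the two halves of \eqref{eqn_main} match symmetrically under $s\leftrightarrow 1-s$, $\tau\leftrightarrow\tau^{-1}$. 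I would present this symmetry as the organizing principle: define $F(s,\tau)$ to be the right-hand side of \eqref{eqn_main}; show $F(s,\tau)=F(1-s,\tau^{-1})$ is manifest, and that $F$ agrees with $\pi^{-s/2}\Gamma(s/2)\zeta(s)$ on a half-plane, hence everywhere by analytic continuation.

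To make the contour shift legitimate I need decay of the integrand along horizontal segments as $|\im(w)|\to\infty$. This is where the growth hypothesis \eqref{g_asymptotics} on $g$ and the classical exponential decay $|\Gamma(\sigma+\i T)|\sim \sqrt{2\pi}|T|^{\sigma-1/2}e^{-\pi|T|/2}$ combine: the factor $e^{-\pi|T|/2}$ from $\Gamma(w)$ beats the factor $e^{b|T|/2}$ (with $b<\pi/2$) coming from $g(2w-s)$ — note the $2w$ doubles the imaginary part but the bound in \eqref{g_asymptotics} carries a compensating $1/2$ — while $\zeta(2w)$ on a vertical line grows only polynomially and $(\pi\tau)^{-w}$ contributes $e^{\arg(\tau)\im(w)}$, which is controlled because $|\arg(\tau)|<\pi/2-b$ leaves a positive margin $\pi/2 - b - 0 >0$ after all the exponential rates are added; so the net exponential rate is strictly negative and the contour shift is justified, and the same estimate shows the Mellin–Barnes integrals (hence the series) converge absolutely and locally uniformly in $s$ throughout $1-a<\re(s)<a$ and in $\tau$ on compact subsets of the sector. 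I expect the main obstacle to be precisely this uniformity-and-justification bookkeeping: tracking that every exponential factor — from $\Gamma$, from $g$, from $\pi^{-w}\tau^{-w}$, and from the auxiliary $\zeta$ factors appearing after the functional equation — has its rate accounted for with a uniform-in-$s$ (for $s$ in a compact substrip) and uniform-in-$\tau$ margin, so that Fubini for the sum–integral interchange and the horizontal-segment vanishing in the contour shift both go through. The algebraic identity of the residues with $-g(s)$, $-g(1-s)$, and the $\zeta$ term itself is, by contrast, a routine computation once the pole structure is laid out, and the $s\leftrightarrow 1-s$ symmetry from oddness of $g$ turns what might look like two separate verifications into one.
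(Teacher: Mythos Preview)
Your plan is essentially the paper's proof, just in a different variable: the paper writes $\frac{1}{2\pi\i}\int G(s+z)g(z)\tau^{-z/2}\,\d z$ with $G(s)=\pi^{-s/2}\Gamma(s/2)\zeta(s)$, and under $z=2w-s$ this is exactly your integral $2\tau^{s/2}\cdot\frac{1}{2\pi\i}\int\Gamma(w)g(2w-s)\zeta(2w)(\pi\tau)^{-w}\,\d w$. The paper's $z$-variable makes the symmetry $z\leftrightarrow -z$ (from oddness of $g$ together with $G(s+z)=G(1-s-z)$) immediate, so the two line integrals match without any appeal to $\zeta(2w)=\chi(2w)\zeta(1-2w)$ or Gamma duplication; your route is equivalent but carries more algebra.

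There is, however, a genuine error in your pole inventory that would derail the computation as you have written it. You assert that the poles of $\Gamma(w)$ at $w=0,-1,-2,\dots$ ``lie to the left of all contours we use and will not be crossed.'' The poles at $w=-1,-2,\dots$ are indeed cancelled by the trivial zeros of $\zeta(2w)$, but the pole at $w=0$ survives (since $\zeta(0)=-\tfrac12$) and \emph{must} be crossed: its residue is $g(-s)\zeta(0)=\tfrac12 g(s)$ by oddness, and after the factor $2\tau^{s/2}$ this is precisely what supplies the $-\tau^{s/2}g(s)$ term in \eqref{eqn_main}. Correspondingly, your attribution of the $-\tau^{s/2}g(s)$ term to the pole of $\zeta(2w)$ at $w=\tfrac12$ is wrong: that residue is $\tfrac12\Gamma(\tfrac12)g(1-s)(\pi\tau)^{-1/2}=\tfrac12 g(1-s)\tau^{-1/2}$, which after multiplication by $2\tau^{s/2}$ yields the $\tau^{(s-1)/2}g(1-s)$ term instead. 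Once you account correctly for all three poles $w=s/2$, $w=\tfrac12$, $w=0$ (producing the $\zeta$ term, the $g(1-s)$ term, and the $g(s)$ term respectively), the leftover integral sits on a line $\re(w)<0$ and your substitution $w\mapsto\tfrac12-w$ together with $G(2w)=G(1-2w)$ identifies it with minus the second series, and the argument closes.
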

\begin{proof}
For $s$ with $\re(s)>-a$ we denote by $I_s$ the interval $(\frac{1}{2}\max(0,\re(s)),
\frac{1}{2}(a+\re(s)) \subset \r$. For $(s,x) \in {\mathcal D}_{a,b}$ and $\gamma \in I_s$  we define 
\begin{equation}\label{def_h_s_x}
h_{\gamma}(s,x):= \frac{1}{2\pi \i} \int_{\gamma-\i \infty}^{\gamma+\i \infty} \Gamma(w) g(2w-s) x^{-w} \d w.
\end{equation}
First, let us prove that the above integral converges, so that $h(s,x)$ is indeed well defined. The condition  $\gamma \in I_s$ implies $\gamma>0$ and $2\gamma - \re(s)\in (0,a)$. This fact combined with our assumption that $g(z)$ is analytic in the strip $0<\re(z)<a$ show that the integrand $w\mapsto \Gamma(w) g(2w-s) x^{-w}$ has no singularities on the line of integration $w\in \gamma+\i \r$.  
From formula 8.328.1 in \cite{Jeffrey2007} we have the following asymptotic result for the gamma function:  for $z=x+\i y$ 
\begin{equation}\label{gamma_asymptotics}
\Gamma(z) \sim \sqrt{2\pi} |y|^{x-1/2} e^{-\pi |y|/2}, \;\;\; y \to \infty.  
\end{equation}
 This result and our assumption \eqref{g_asymptotics} imply that there exists a constant $C_1=C_1(\gamma,s)>0$ such that for $w=\gamma+\i y$ and $y\in \r$ we have
$$
|\Gamma(w) g(2w-s)|< C_1 e^{-(\pi/2-b) |y|} (1+|y|)^{\gamma-1/2}, \;\;\; y\to \infty.
$$
From the above asymptotic result  and the identity 
$$
|x^{-w}|=|x|^{-\gamma} e^{\arg(x) y}, \;\;\; w=\gamma+ \i y, 
$$
we conclude that the integrand in \eqref{def_h_s_x} is bounded by 
\begin{equation}\label{integrand_asymptotics}
|\Gamma(w) g(2w-s) x^{-w} | \le C_1 |x|^{-\gamma} e^{-(\pi/2-b) |y|+\arg(x) y} (1+|y|)^{\gamma-1/2},
\end{equation}
and this (as a function of $y\in \r$) is integrable as long as $(s,x) \in {\mathcal D}_{a,b}$. 
Thus $h_{\gamma}(s,x)$ is well-defined for $(s,x) \in {\mathcal D}_{a,b}$ and $\gamma \in I_s$. The estimate \eqref{integrand_asymptotics} also implies the following result, which we will need later: for any small $\epsilon>0$, $s$ in the half-plane $\re(s)>-a$ and $\gamma \in I_s$, there exists $C_2=C_2(\epsilon,s,\gamma)$ such that  $|h_{\gamma}(s,x)|<C_2 |x|^{-\gamma}$ for all $x$ with  $|\arg(x)|<\pi/2-b-\epsilon$.

Next, let us fix  $(s,x) \in {\mathcal D}_{a,b}$. The estimate \eqref{integrand_asymptotics} shows that the integrand decays exponentially fast as $\im(w)\to \infty$, uniformly in the vertical strip $\re(w) \in I_s$.  Thus, by shifting the contour of integration in this strip, we see that the function $\gamma \mapsto h_{\gamma}(s,x)$ is constant in the interval $I_s$ and the functions $\{h_{\gamma}(s,x)\}_{\gamma \in I_s}$ are analytic continuations of a single function $h(s,x)$. For any fixed $\gamma>0$, the formula \eqref{def_h_s_x} defines this function $h(s,x)$ in a domain 
$$
\Big\{(s,x) \in {\mathbb C}^2 \; : \; 2\gamma-a<\re(s)<2\gamma,\; x\neq 0, \;  |{\textnormal{arg}}(x)|<\frac{\pi}{2}-b \Big\} \subset {\mathcal D}_{a,b}.
$$
This is true since the condition $\gamma \in I_s$ is equivalent to $2\gamma-a<\re(s)<2\gamma$. As $\gamma$ ranges through all positive numbers, the union of these above smaller domains gives us the entire domain ${\mathcal D}_{a,b}$, therefore the function $h(s,x)$ is analytic in $ {\mathcal D}_{a,b}$.

As we have established above, for any fixed $s$ with $\re(s)>-a$ and any $\gamma\in I_s$, there exists $C_2=C_2(s,\gamma)$ such that $|h(s,x)|< C_2 |x|^{-\gamma}$ for all $x \in (0,\infty)$. This implies that the integral in the left-hand side of 
\eqref{h_Melin_transform} converges for all $w$ in the vertical strip 
$\frac{1}{2}\max(0,\re(s))<\re(w)<
\frac{1}{2}(a+\re(s))$, and now identity \eqref{h_Melin_transform} follows by Mellin inversion formula. This ends the proof of part (i).

Next, we denote $G(s)=\pi^{-s/2} \Gamma(s/2) \zeta(s)$. It is well known \cite{Titchmarsh1987}
that $G$ is analytic in ${\mathbb C}$, except for two simple poles at $s=0$ and $s=1$ with the corresponding residues $-1$ and $1$ and that $G$ satisfies the functional equation $G(s)=G(1-s)$. It is also known that as $\im(s)\to \infty$ in any vertical strip $|\re(s-1/2)|<A$ the function $|\zeta(s)|$ is bounded by $|s|^B$ for some $B=B(A)>0$ (in fact, one can take $B=1/6+A$, see \cite{Titchmarsh1987}[Chapter V]). Combining this estimate with \eqref{gamma_asymptotics} we conclude that for any $A>0$ there exists $B_2=B_2(A)>0$ such that 
\begin{equation}\label{eq:f_asymptotics}
|G(s)|=O\Big(|s|^{B_2} e^{-\frac{\pi}{4} |\im(s)|}\Big), \;\;\; \im(s)\to \infty,
\end{equation}
uniformly in the strip $|\re(s)-1/2|<A$. 

Let us fix $s \notin \{0,1\}$ and $\tau \in \c \setminus \{0\}$, satisfying $1-a<\re(s)<a$ and  $|\arg(\tau)|<\pi/2-b$, and take any $u>|\im(s)|$ and $c>0$ such that 
$\max(\re(s), 1-\re(s))<c<a$. We define a contour, traversed counter-clockwise 
$$
R_{c,u}:=(c-\i u, c + \i u] \cup (c+\i u, -c + \i u] \cup (-c+\i u,-c-\i u] \cup 
(-c - \i u, c-\i u]. 
$$
It is clear that $R_{c,u}$ is the boundary of a rectangle built on vertices $\pm c \pm \i u$.
Now we consider the following integral
\begin{equation}\label{def:F1}
F(s):=\frac{1}{2\pi \i} \int_{R_{c,u}} G(s+z) g(z) \tau^{-\frac{z}{2}} \d z. 
\end{equation}
By construction, the contour $R_{c,u}$ contains the three points $0$, $-s$ and $1-s$, which are the simple poles of the integrand. Applying Cauchy's Theorem we obtain 
\begin{equation}\label{eq:F1}
F(s)=G(s)-\tau^{s/2} g(-s)+\tau^{(s-1)/2} g(1-s). 
\end{equation} 
Using our assumptions on $\arg(\tau)$ and $g(z)$ (see \eqref{g_asymptotics}) and upper bound \eqref{eq:f_asymptotics} we conclude that the function 
$$
z \mapsto G(s+z) g(z) \tau^{-\frac{z}{2}}
$$
converges to zero exponentially fast as $\im(z) \to \pm \infty$, uniformly in the strip $\re(z)<a$. Thus we can take the limit in \eqref{def:F1} and \eqref{eq:F1} as $u\to +\infty$, and the integrals over the upper and lower sides of the rectangle $R_{c,u}$ will converge to zero, giving us the following result: 
\begin{align}
\label{eq:f3}
G(s)&=-\tau^{s/2}g(s)-\tau^{(1-s)/2}g(1-s)\\\nonumber
&+\frac{1}{2\pi \i} \int_{c-\i \infty}^{c+\i \infty} G(s+z) g(z) \tau^{-z/2} \d z+
\frac{1}{2\pi \i} \int_{-c+\i \infty}^{-c-\i \infty}  G(s+z) g(z) \tau^{-z/2}\d z. 
\end{align}
When deriving the above formula we also used the fact that $g(-s)=-g(s)$. 
Since $G(s+z)=G(1-s-z)$, by changing the variable of integration $z \mapsto -z$ in the second integral in the right-hand side of \eqref{eq:f3} we obtain
\begin{equation}\label{eq:f4}
G(s)=-\tau^{s/2}g(s)-\tau^{(1-s)/2}g(1-s)+H(s, \tau)+H(1-s, \tau^{-1}),
\end{equation}
where we denoted 
$$
H(s,\tau):=\frac{1}{2\pi \i} \int_{c-\i \infty}^{c+\i \infty} G(s+z) g(z) \tau^{-z/2} \d z. 
$$
The condition $c>1-\re(s)$ (that we imposed above) allows us to expand $\zeta(s+z)=\sum\limits_{n\ge 1} n^{-s-z}$ as an absolutely convergent series (this is valid for all $z$ on the vertical line  $\re(z)=c$). Applying Fubini's theorem
we have  
\begin{align}\label{eq:G1}
H(s,\tau)&=
\sum\limits_{n\ge 1} \frac{1}{2\pi \i} \int_{c-\i \infty}^{c+\i \infty} \pi^{-(s+z)/2}
\Gamma((s+z)/2)  g(z) n^{-s-z} \tau^{-z/2} \d z\\
\nonumber
&= 
2 \tau^{s/2} \sum\limits_{n\ge 1} 
\frac{1}{2\pi \i} \int_{c_1-\i \infty}^{c_1+\i \infty} \Gamma(w) g(2w-s) 
(\pi \tau n^2)^{-w}  \d w=
2 \tau^{s/2} \sum\limits_{n\ge 1} h(s,\pi \tau n^2). 
\end{align}
In the second step in the above computation we changed the variable of integration $z\mapsto w=(s+z)/2$ and denoted $c_1:=(c+\re(s))/2$. Note that 
$\max(\re(s),1/2)<c_1<(a+\re(s))/2$, which implies $c_1 \in I_s$ and justifies the last step in \eqref{eq:G1}. Formulas \eqref{eq:f4} and \eqref{eq:G1} give us the desired result \eqref{eqn_main}.  

\end{proof}

As we show next, under stronger assumption on the function $g(z)$, the function $h(s,x)$ can be given as an integral of the incomplete gamma function. 

\begin{proposition}\label{prop1}
Assume that the function $g$ satisfies the assumptions of Theorem \ref{thm_main} with some $b<0$. Then there exists a function $f : (0,\infty) \mapsto \c$,  defined via the Mellin transform 
\begin{equation}\label{def_f0}
\int_0^{\infty} f(x) x^{w-1} \d x= w g(2w), \;\;\; -a/2<\re(w)<a/2, 
\end{equation}  
which is analytic in the sector $|\arg(x)|<|b|$ and satisfies an identity $f(x)=f(1/x)$ in this sector. We have an integral representation
\begin{equation}\label{h_s_x_formula}
h(s,x)=x^{-s/2} \int_0^{\infty}  \Gamma(s/2,v) f(x/v)v^{-1 }\d v, 
\;\;\; \re(s)>-a,  \; |\arg(x)|<|b|. 
\end{equation}
\end{proposition}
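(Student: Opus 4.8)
The plan is to build $f$ as an inverse Mellin transform and then to recognise the right-hand side of \eqref{h_s_x_formula} as a multiplicative convolution whose Mellin transform coincides with \eqref{h_Melin_transform}.

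First I would define, for any $\sigma\in(-a/2,a/2)$,
\[
f(x):=\frac{1}{2\pi\i}\int_{\sigma-\i\infty}^{\sigma+\i\infty} w\,g(2w)\,x^{-w}\,\d w .
\]
The function $w\mapsto w\,g(2w)$ is analytic on $|\re(w)|<a/2$ (the simple pole of $g$ at the origin is killed by the factor $w$, and $w\,g(2w)\to\tfrac12$ as $w\to0$ because the residue of $g$ at $0$ equals one). Since $b<0$, the bound \eqref{g_asymptotics} now reads $|w\,g(2w)|\le C\,|w|\,e^{b|\im(w)|}$ with an \emph{exponentially decaying} factor, so, using $|x^{-w}|=|x|^{-\sigma}e^{\arg(x)\im(w)}$, the integral converges absolutely whenever $|\arg(x)|<|b|$. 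A contour shift inside the strip $|\re(w)|<a/2$ (the integrand being exponentially small in $\im(w)$, uniformly on compact subsets) shows the value is independent of $\sigma$ and that $f$ is analytic on the sector $|\arg(x)|<|b|$; the same estimate yields, exactly as for $h$ in the proof of Theorem \ref{thm_main}, a bound $|f(x)|\le C(\sigma,\epsilon)\,|x|^{-\sigma}$ for every $\sigma\in(-a/2,a/2)$ and $|\arg(x)|<|b|-\epsilon$, and then \eqref{def_f0} follows by Mellin inversion. The functional equation $f(x)=f(1/x)$ is then immediate: the substitution $w\mapsto-w$ together with $g(-2w)=-g(2w)$ turns $w\,g(2w)\,x^{-w}$ into $w\,g(2w)\,(1/x)^{-w}$ and the contour $\re(w)=\sigma$ into $\re(w)=-\sigma$, both admissible.

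For the integral representation I would first check that $\int_0^\infty\Gamma(s/2,v)\,f(x/v)\,v^{-1}\,\d v$ converges: at $v=\infty$ this is clear since $\Gamma(s/2,v)$ decays exponentially and dominates the at most polynomial growth of $f(x/v)$, and near $v=0$ one combines $|\Gamma(s/2,v)|=O\!\big(v^{\min(0,\re(s)/2)}\big)$ (up to a harmless logarithm when $s/2$ is a non-positive integer) with $|f(x/v)|\le C\,|x/v|^{-\sigma}$; the hypothesis $\re(s)>-a$ lets one pick $\sigma\in\big(\max(0,-\re(s)/2),a/2\big)$, making the integrand $O\!\big(v^{\sigma-1+\min(0,\re(s)/2)}\big)$ and hence integrable at $0$. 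With this in hand I would use the elementary identity
\[
\int_0^\infty\Gamma(s/2,v)\,v^{w-1}\,\d v=\frac{\Gamma(w+s/2)}{w},\qquad \re(w)>\max(0,-\re(s)/2),
\]
a one-line integration by parts based on $\tfrac{\d}{\d v}\Gamma(s/2,v)=-v^{s/2-1}e^{-v}$. Since the integral in \eqref{h_s_x_formula} is the multiplicative convolution of $v\mapsto\Gamma(s/2,v)$ with $f$, the convolution theorem (justified by the absolute-convergence estimates above via Fubini) shows that its Mellin transform equals $\tfrac{\Gamma(w+s/2)}{w}\cdot w\,g(2w)=\Gamma(w+s/2)\,g(2w)$ on the strip $\re(w)\in\big(\max(0,-\re(s)/2),a/2\big)$; multiplying by $x^{-s/2}$ shifts the argument by $s/2$, turning this into $\Gamma(w)\,g(2w-s)$ on the strip $\tfrac12\max(0,\re(s))<\re(w)<\tfrac12(a+\re(s))$, which is precisely the Mellin transform \eqref{h_Melin_transform} of $h(s,x)$. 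As both $h(s,\cdot)$ and $x\mapsto x^{-s/2}\int_0^\infty\Gamma(s/2,v)f(x/v)v^{-1}\,\d v$ are analytic in $x$ on the sector $|\arg(x)|<|b|$ and have the same Mellin transform on a vertical line inside their common strip of convergence, they must coincide, which is \eqref{h_s_x_formula}.

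The delicate part is the bookkeeping around the Fubini/convolution step: one has to control simultaneously the admissible range of $\re(w)$ coming from (i) the incomplete-gamma Mellin transform, (ii) the growth of $f$ at $0$ and $\infty$, and (iii) the defining strip of the Mellin transform of $f$, and verify that these three ranges overlap for \emph{every} $s$ with $\re(s)>-a$; the auxiliary ingredient is pinning down the behaviour of $\Gamma(s/2,v)$ as $v\to0$ (bounded when $\re(s)\ge0$, of order $v^{s/2}$ when $\re(s)<0$, with a logarithmic resonance at non-positive even integers $s$ that does not affect integrability). Beyond that, everything is a routine repetition of the contour estimates already carried out in the proof of Theorem \ref{thm_main}.
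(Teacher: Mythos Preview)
Your argument is correct and follows essentially the same route as the paper: define $f$ by the inverse Mellin integral (well-defined because $b<0$ forces exponential decay), read off analyticity, the symmetry $f(x)=f(1/x)$, and the power bounds $|f(x)|\le C|x|^{-\sigma}$, and then identify the right-hand side of \eqref{h_s_x_formula} via the Mellin convolution theorem. The only cosmetic difference is in how the factor $x^{-s/2}$ is distributed: the paper splits $\Gamma(w)g(2w-s)$ as $\dfrac{\Gamma(w)}{w-s/2}\cdot(w-s/2)g(2w-s)$, recognises these as the Mellin transforms of $x^{-s/2}\Gamma(s/2,x)$ (quoted from a table) and $x^{-s/2}f(x)$, convolves, and then changes variables $t=x/v$; you instead convolve $\Gamma(s/2,\cdot)$ with $f$ directly and apply the shift $w\mapsto w-s/2$ afterwards, deriving the incomplete-gamma Mellin transform by a one-line integration by parts rather than citing Gradshteyn--Ryzhik. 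Your explicit check of integrability at $v=0$ and the verification that the three admissible strips overlap precisely when $\re(s)>-a$ are exactly the bookkeeping the paper leaves implicit.
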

\begin{proof}
For $x>0$ we define 
\begin{equation}\label{def_f}
f(x)=\frac{1}{2\pi \i} \int_{\gamma-\i \infty}^{\gamma + \i \infty} 
w g(2w) x^{-w} \d w,
\end{equation}
where $|\gamma|<a/2$. Since we assumed that $b<0$, formula \eqref{g_asymptotics} tells us that 
$$
|g(2w)|=O\big( e^{-|b| \times |\im(w)|} \big)
$$
 as $ w \to \infty$ in the strip $|\re(w)|<a/2$. Thus the integral in \eqref{def_f} converges and $f(x)$ is well defined for $x>0$. Due to the exponential decay of $g(2w)$, the function $f(x)$ is analytic in the sector $|\arg(x)|<|b|$. The fact that the function $w g(2w)$ is even implies that $f(x)=f(1/x)$ for all $x$ in the sector $|\arg(x)|<|b|$, and the fact that $wg(2w)$ is analytic in the strip $|\re(w)|<a/2$ implies that for every $\epsilon>0$ 
 \begin{equation}
 f(x)=
 \begin{cases}
 O(x^{a/2-\epsilon}), \;\;\; & x\to 0, \\
 O(x^{-a/2+\epsilon}), \;\;\; & x\to \infty,  
 \end{cases}
\end{equation}  
and this asymptotics holds in the sector $|\arg(x)|<|b|$. 

Now, according to \eqref{h_Melin_transform}, the function $x\mapsto h(s,x)$ has Mellin transform $\Phi_1(w) \Phi_2(w)$, where we denoted
$$
\Phi_1(w):=\frac{\Gamma(w)}{w-s/2}, \;\;\; \Phi_2(w):=(w-s/2) g(2w-s). 
$$
Formula (6.455.1) from \cite{Jeffrey2007} implies
\begin{equation}\label{incomplete_Gamma_Mellin}
\int_0^{\infty} x^{-\nu}\Gamma(\nu,x) x^{z-1} \d x=\frac{\Gamma(z)}{z-\nu}, \;\;\; \re(z)>0, \re(z-\nu)>0. 
\end{equation}
Therefore, the function $\Phi_1(w)$ is the Mellin transform of the function $x \mapsto x^{-s/2} \Gamma(s/2,x)$, and this holds for $\re(w)>\max(0,\re(s)/2)$. The function $\Phi_2(w)$ is the Mellin transform of $x^{-s/2}f(x)$, for $|\re(w-s/2)|<a/2$. The reader may check that the half-plane $\re(w)>\max(0,\re(s)/2)$ intersects with the vertical strip $|\re(w-s/2)|<a/2$ as long as $\re(s)>-a$. Thus we can use the Mellin convolution identity and conclude that for $x>0$  and $\re(s)>-a$
$$
h(s,x)=\int_0^{\infty} t^{-s/2-1} \big(x/t)^{-s/2} \Gamma(s/2,x/t) f(t) \d t,
$$
and from this formula we obtain the desired result \eqref{h_s_x_formula} by change of variables $t=x/v$. Thus we have established formula \eqref{h_s_x_formula} for $\re(s)>-a$ and $x>0$, and we can extend it to the sector $|\arg(x)|<|b|$ using the analyticity and asymptotic properties of $f(x)$ that we established above.
\end{proof}

Next we present five applications of the above results. 
Theorem \ref{thm_main} is intended to be used in the following way. We start with a function $g$ satisfying the necessary assumptions and then we try to identify a function $h(s,x)$ via its Mellin transform 
\eqref{h_Melin_transform}. Sometimes we can identify $h(s,x)$ by finding the desired Mellin transform pair in a table of integral transforms, such as \cite{Jeffrey2007} or \cite{Oberhettinger}. This is the method we use in examples 1, 3 and 5 below. When the desired Mellin transform pair can not be found, we may attempt to find an integral or a series representation for $h(s,x)$ -- this is the approach we follow in examples 2 and 4.

\subsection*{Example 1}

Take $g(z)=1/z$. This function satisfies conditions of Theorem \ref{thm_main} with $b=0$ and $a=+\infty$. 
Comparing formula \eqref{incomplete_Gamma_Mellin} with the Mellin transform equation \eqref{h_Melin_transform} that defines $h(s,x)$, we find 
$$
h(s,x)=\frac{1}{2} x^{-s/2} \Gamma(s/2,x), 
$$
and applying Theorem \ref{thm_main} we obtain the following result, valid for 
 for $s\in \c$ and 
$|\textnormal{arg}(\tau)|<\pi/2$:
\begin{align}
\pi^{-s/2} \Gamma(s/2) \zeta(s)&=-\frac{\tau^{s/2}}{s}+\pi^{-s/2}\sum\limits_{n\ge 1} n^{-s}
\Gamma(s/2,\pi \tau n^2) 
\\ \nonumber
&+\frac{\tau^{(s-1)/2}}{s-1} +
\pi^{(s-1)/2}\sum\limits_{n\ge 1} 
n^{s-1}\Gamma((1-s)/2,\pi \tau^{-1} n^2). 
\end{align}
The above formula is well known.  A special case of this formula with $\tau=1$ was used by Riemann to prove the functional equation for $\zeta(s)$ (see \cite{Titchmarsh1987}[Section 2.6]). In the limit as $\tau \to \i$ this formula gives us \eqref{PC_formula1}.  

\vspace{0.25cm}

\subsection*{Example 2}
Now we take $g(z)=e^{\alpha z^2}/z$. Conditions of Theorem \ref{thm_main} are satisfied with $a=+\infty$ and any $b<0$. Berry and Keating 
\cite{Berry_Keating1992} used this function when deriving the asymptotic term $Z_0(t,K)$ in \eqref{BK_formula}, though their method was slightly different. The difference lies in formula \eqref{def:F1}, where we used a function $G(s)=\pi^{-s/2} \Gamma(s/2) \zeta(s)$ and Berry and Keating used $Z(\i (s-1/2))$.  For this choice of $g(z)$ it seems that there are no  explicit representations for $h(s,x)$, however Proposition \ref{prop1} provides a simple integral representation.  Performing a change of variables $x=e^{y}$ and using the Gaussian integral one can easily check that the function $f$ defined by the Mellin transform identity \eqref{def_f0} is given by
$$
f(x)=\frac{1}{8 \sqrt{\pi \alpha}} \exp\Big(-\frac{1}{16 \alpha}\ln(x)^2 \Big), \;\;\; x>0. 
$$
Note that the function $2f(x)/x$ is the probability density function of a random variable $\xi$ having lognormal distribution with parameters $\mu=0$ and $\sigma^2=8\alpha$. Therefore, formula \eqref{h_s_x_formula} gives us the following expression 
$$
h(s,x)=\frac{1}{2} x^{-s/2} {\mathbb E} \big[  \Gamma\big(s/2,x e^{\sqrt{8\alpha} X}\big) \big], 
$$
where $X$ is a standard normal random variable. 
 
%

\vspace{0.25cm}

\subsection*{Example 3}

For our third example we choose 
$$
g(z)=\frac{\pi/2}{\sin(\pi z/2)}.
$$ 
This function satisfies conditions of Theorem \ref{thm_main} with $b=-\pi$ and $a=2$. 
Formula 5.26 from \cite{Oberhettinger} states that 
$$
\int_0^{\infty}  e^x \Gamma(\nu,x) x^{z-1} \d x=\frac{\pi \Gamma(z)}
{\Gamma(1-\nu) \sin(\pi(\nu+z))}, \;\;\; 0< \re(z)<1-\re(\nu). 
$$
From the above formula and the Mellin transform identity \eqref{h_Melin_transform} we find
$$
h(s,x)=\frac{1}{2}\Gamma(1+s/2)  e^x \Gamma(-s/2,x). 
$$
We recall that the normalized incomplete gamma function is  $Q(s,x):=\Gamma(s,x)/\Gamma(s)$. Applying the reflection formula for the gamma function we can write
$$
h(s,x)=-\frac{\pi/2}{\sin(\pi s/2)} e^{x} Q(-s/2,x).
$$
Now Theorem \ref{thm_main} gives us the following result, which seems to be new: 
\begin{corollary} For $-1<\re(s)<2$ and $|\textnormal{arg}(\tau)|<3\pi/2$ 
\begin{align*}
\pi^{-s/2} \Gamma(s/2) \zeta(s)&=
- \frac{\pi \tau^{s/2} }{\sin(\pi s/2)} \Big[ \frac{1}{2} +
\sum\limits_{n\ge 1} e^{\pi \tau n^2} Q(-s/2,\pi \tau n^2) \Big]
\\
&- \frac{\pi \tau^{(s-1)/2} }{\cos(\pi s/2)} \Big[ \frac{1}{2} +
\sum\limits_{n\ge 1} e^{\pi \tau^{-1} n^2} Q((s-1)/2,\pi \tau^{-1} n^2) \Big].
\end{align*}
\end{corollary}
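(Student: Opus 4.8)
The plan is to apply Theorem \ref{thm_main} to $g(z)=\tfrac{\pi/2}{\sin(\pi z/2)}$, after checking it satisfies the hypotheses with $a=2$ and $b=-\pi$, then to read off $h(s,x)$ from the quoted Mellin-transform formula and simplify.

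First I would verify the three conditions on $g$. Oddness is immediate from $\sin(-\pi z/2)=-\sin(\pi z/2)$. In the strip $|\re(z)|<2$ the zeros of $\sin(\pi z/2)$, which lie at $z\in 2\zz$, reduce to the single point $z=0$, where $\sin(\pi z/2)=\tfrac{\pi z}{2}(1+O(z^2))$, so $g$ has a simple pole there with residue one; note $a=2>1/2$. For the growth bound, writing $z=\sigma+\i t$ one computes $|\sin(\pi z/2)|^2=\sin^2(\pi\sigma/2)+\sinh^2(\pi t/2)\ge \sinh^2(\pi t/2)$, hence $|g(z)|\le \tfrac{\pi/2}{|\sinh(\pi t/2)|}\le C e^{-\pi|t|/2}$ for $|t|>1$, which is \eqref{g_asymptotics} with $b=-\pi<\tfrac{\pi}{2}$.

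Next I would identify $h(s,x)$. By \eqref{h_Melin_transform} the Mellin transform of $x\mapsto h(s,x)$ is $\Gamma(w)g(2w-s)=\tfrac{\pi}{2}\,\Gamma(w)/\sin(\pi w-\pi s/2)$ on the strip $\tfrac{1}{2}\max(0,\re(s))<\re(w)<1+\tfrac{1}{2}\re(s)$. In the Oberhettinger formula $\int_0^\infty e^x\Gamma(\nu,x)x^{z-1}\,\d x=\pi\Gamma(z)/(\Gamma(1-\nu)\sin(\pi(\nu+z)))$ I would take $\nu=-s/2$ and $z=w$; its range of validity $0<\re(z)<1-\re(\nu)$ overlaps the strip above whenever $\re(s)>-2$, so by Mellin inversion (and, for $\re(s)\le 0$, the analytic-continuation-in-$s$ statement already contained in part (i) of Theorem \ref{thm_main}) one obtains
\[
h(s,x)=\tfrac{1}{2}\,\Gamma(1+s/2)\,e^x\,\Gamma(-s/2,x).
\]
The reflection formula $\Gamma(-s/2)\Gamma(1+s/2)=\pi/\sin(-\pi s/2)=-\pi/\sin(\pi s/2)$ then rewrites this as $h(s,x)=-\tfrac{\pi/2}{\sin(\pi s/2)}\,e^x\,Q(-s/2,x)$, with $Q(-s/2,x)=\Gamma(-s/2,x)/\Gamma(-s/2)$.

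Finally I would substitute into \eqref{eqn_main}, legitimate for $1-a<\re(s)<a$, i.e.\ $-1<\re(s)<2$, and for $|\arg(\tau)|<\tfrac{\pi}{2}-b=\tfrac{3\pi}{2}$. Using $-g(s)=-\tfrac{\pi}{\sin(\pi s/2)}\cdot\tfrac{1}{2}$, $2h(s,\pi\tau n^2)=-\tfrac{\pi}{\sin(\pi s/2)}e^{\pi\tau n^2}Q(-s/2,\pi\tau n^2)$, together with $\sin(\pi(1-s)/2)=\cos(\pi s/2)$ and the corresponding identities for the arguments $1-s$ and $\pi\tau^{-1}n^2$, the two bracketed groups of \eqref{eqn_main} collapse exactly to the two lines of the asserted formula; absolute and uniform convergence of the two series is inherited from Theorem \ref{thm_main}(ii). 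The only genuinely delicate point is matching the strips of validity in the Mellin step — in particular the case $\re(s)\le 0$, where Oberhettinger's formula is not directly applicable — but this is handled by the analytic-continuation argument already built into the proof of Theorem \ref{thm_main}(i). The poles of $1/\sin(\pi s/2)$ at $s=0$ and of $1/\cos(\pi s/2)$ at $s=1$ are harmless, since $\pi^{-s/2}\Gamma(s/2)\zeta(s)$ has matching poles there and the identity is to be read as one between meromorphic functions on the strip.
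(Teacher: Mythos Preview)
Your proposal is correct and follows essentially the same route as the paper: the same choice $g(z)=\tfrac{\pi/2}{\sin(\pi z/2)}$, the same Oberhettinger Mellin-transform identity with $\nu=-s/2$, the same reflection-formula rewrite to $Q(-s/2,x)$, and the same substitution into Theorem \ref{thm_main}. Your caveat about needing analytic continuation for $\re(s)\le 0$ is unnecessary --- the Oberhettinger strip $0<\re(w)<1+\re(s)/2$ already overlaps the strip of \eqref{h_Melin_transform} for every $\re(s)>-2$ --- but this does not affect correctness.
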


\vspace{0.25cm}

\subsection*{Example 4}

Example 3 can be extended in the following way.  
Let $r=p/q$ be a positive rational number and define
$$
g(z)=\frac{\pi r}{\sin(\pi r z )}.
$$ 
Now conditions of Theorem \ref{thm_main} are satisfied with $b=-2\pi r$ and $a=1/r$. We claim that in this case $h(s,x)$ can be computed as a sum of two infinite power series: 
\begin{align}\label{eqn_h_s_x_4_main}
h(s,x)&=
-\pi r \sum\limits_{n\ge 0} \frac{ (-1)^n}{  \sin(\pi r(2n+s)}\times \frac{x^n}{n!}
\\ \nonumber &+\frac{1 }{2} \pi x^{-s/2} \sum\limits_{m\ge 0} \frac{(-1)^m}{\sin(\pi (s-m/r)/2)}
\times 
\frac{  x^{m/(2r)}}{\Gamma(1-s/2+m/(2r))}.
\end{align}
We will sketch the proof of formula \eqref{eqn_h_s_x_4_main} (the main ideas and details  are very similar to the proof of Corollary 3.16 in 
\cite{KKPW2014}). 
To establish formula \eqref{eqn_h_s_x_4_main}, we start by writing $h(s,x)$ in the form \eqref{def_h_s_x} : 
\begin{equation}\label{eqn_h_s_x_4}
h(s,x)=  \frac{r}{2 \i} \int_{\gamma-\i \infty}^{\gamma+\i \infty} \Phi(w)  \d w, \;\;\; {\textnormal{ where }} \; \Phi(w) = \frac{ \Gamma(w) x^{-w}}{\sin(\pi r (2w-s))}. 
\end{equation}
We take $s$ to be a fixed number in the vertical strip $-1/(2p) < \re(s) < 0$ and $\gamma$ any number in the interval $(0,1/(4p))$. The reader may want to recall the definition of the interval $I_s$ in the proof of Theorem \ref{thm_main}, to see that this choice of $\gamma$ is legitimate. 
The gamma function in the integrand in \eqref{eqn_h_s_x_4} has simple poles at points 
$-n$, $n=0,1,2,\dots$ and the function $\sin(\pi r (2w-s))^{-1}$ has simple poles at points 
$(1/2) (s-m/r)$, $m\in {
\mathbb Z}$. Due to our restriction on $s$, these two sets do not intersect, thus all the poles of the function $\Phi(w)$ are simple. The periodic structure of the poles of $\Phi$ implies that  we can find a sequence of $\{\gamma_k\}_{k\ge 1}$ decreasing to $-\infty$ such that each point $\gamma_k$ satisfies $\gamma_k<\gamma$ and has distance at least $1/(8p)$ from each pole of $\Phi$. 
 
Now we perform the standard operation of shifting the contour of integration in \eqref{eqn_h_s_x_4} from $\gamma + \i\r$ to $\gamma_k + \i \r$:
\begin{align}\label{eqn_h_s_x_4_proof}
 h(s,x)&= \pi r \sum {\textnormal{Res}}\big(\Phi(w), w=-n\big) \\
 \nonumber &+ \pi r 
 \sum {\textnormal{Res}}\Big(\Phi(w), w=\frac{1}{2}\Big(s-\frac{m}{r}\Big)\Big)+
  \frac{r}{2 \i} \int_{\gamma_k-\i \infty}^{\gamma_k+\i \infty} \Phi(w)  \d w,
\end{align}
where the summation in the first sum is over all $n$ such that $\gamma_k \le - n \le  0$ and the summation in the second sum is over all $m$ such that $\gamma_k \le (s-m/r)/2 \le s/2$. The residues in \eqref{eqn_h_s_x_4_proof}  give us the coefficients in the two infinite series in \eqref{eqn_h_s_x_4_main}. 
The integral in the right-hand side of the \eqref{eqn_h_s_x_4_proof} converges to zero as $k\to +\infty$. This is true since the function $x^{-w} \Gamma(w)$ resricted to the vertical line $\re(w)=\gamma_k$ converges to zero uniformly in $\im(w)$ as $k \to +\infty$ and the term $1/\sin(\pi r (2w-s))$ is bounded on line $\re(w)=\gamma_k$. Here our choice of $\gamma_k$ is crucial -- we need to ensure that the line of integration $w\in \gamma_k + \i \r$ stays away from the poles of the integrand. To summarize, in the limit as $k\to +\infty$, formula \eqref{eqn_h_s_x_4_proof} gives us the desired result \eqref{eqn_h_s_x_4_main}. 

The infinite series representation \eqref{eqn_h_s_x_4_main} is in fact valid for almost all irrational $r$. This can be established in the same way as Corollary 3.16 in 
\cite{KKPW2014}. However, for rational $r=p/q$ the convergence of both series in \eqref{eqn_h_s_x_4_main} is obvious and for small $p$ and $q$ these series can be simplified further. We already saw that in the case when $r=1/2$ (our example 3) we could express $h(s,x)$ in terms of a normalized incomplete gamma function. Let us consider now the case with $r=1/4$. By separating the terms with $n$ even from the terms with $n$ odd in the first sum in 
\eqref{eqn_h_s_x_4_main} and applying reflection formula for the gamma function in the second sum we obtain
\begin{align}
h(s,x)&=-\frac{\pi}{4} \frac{\cos(x)}{\sin(\pi s/4)} + 
\frac{\pi}{4} \frac{\sin(x)}{\cos(\pi s/4)}\\ \nonumber
&+\frac{1}{2} \Gamma(s/2) x^{-s/2} 
{}_1 F_2(1;1/2-s/4, 1-s/4 ; -x^2/4).
\end{align}


\vspace{0.25cm}

\subsection*{Example 5}

There is another form of $g(z)$ that can lead to somewhat explicit expressions for $h(s,x)$. We take  
\begin{equation}\label{def_g_5}
g(z)=\frac{A}{z} \times \frac{\prod\limits_{j=1}^n
\Gamma(\alpha_j+z/2) \Gamma(\alpha_j-z/2)}
{\prod\limits_{j=1}^m \Gamma(\beta_j+z/2) \Gamma(\beta_j-z/2)}
\end{equation}
where  the normalizing constant $C$ is given by
$$
A=\prod\limits_{j=1}^n \Gamma(\alpha_j)^{-2} \prod\limits_{j=1}^m \Gamma(\beta_j)^2.
$$
If $n\ge m$ and the constants $\beta_j$ have positive real part and $\alpha_j$ satisfy $\re(\alpha_j)>1/4$, then $g(z)$ given in \eqref{def_g_5} satisfies the assumptions in Theorem \ref{thm_main} with 
$$
a=2 \times \min \{ \re(\alpha_j) \; : \; 1\le j \le n\} \;\; {\textnormal{   and   }}\;\;  b=\pi (m-n). 
$$ 
For function $g$ given by \eqref{def_g_5}, the function $h(s,x)$ defined via \eqref{def_h_s_x} can be expressed in terms of the Meijer $G$-function (see \cite{Mathai})
\begin{equation}
h(s,x)= A \times G_{p,q}^{n+2,n}\Big( 
\begin{matrix}
{\bf a}-s/2 \\ 0, {\bf b} - s/2
\end{matrix} \Big
\vert x 
\Big ),
\end{equation}
where $p=m+n+1$, $q=m+n+2$ and
\begin{equation*}
{\bf a}:=[1-\alpha_1,\dots,1-\alpha_n,\beta_1,\dots,\beta_m,0], \;\;\; 
{\bf b}:=[1,\alpha_1, \dots, \alpha_n, 1-\beta_1,\dots,1-\beta_m].
\end{equation*}
Note that our example 3, where $h(s,x)$ is given in terms of the incomplete gamma function, can be obtained from \eqref{def_g_5} by setting $n=\alpha_1=1$ and $m=0$. 
 It is possible that other choices of parameters may also lead to simple expressions for $h(s,x)$, however we did not pursue this further.

\section*{Acknowledgements}

The research was supported by the Natural Sciences and Engineering Research Council of Canada. The
author would like to thank two anonymous referees for careful reading of the paper and for providing helpful comments and suggestions.


\end{document}